\DeclareMathOperator{\lh}{\mathsf{lh}}
\DeclareMathOperator{\length}{\mathsf{length}}
\DeclareMathOperator{\inn}{\mathsf{in}}
\DeclareMathOperator{\outt}{\mathsf{out}}
\author{Mikhail Patrakeev
}
\title{A simple closed curve in $\mathbb{R}^3$ whose convex hull equals\\
the half-sum of the curve with itself
}
\date{}
\begin{document}
\hyphenation{no-n-in-cre-a-s-ing tran-si-ti-ve par-ti-al}
\renewcommand{\proofname}{\textup{\textbf{Proof}}}
\renewcommand{\abstractname}{\textup{Abstract}}
\renewcommand{\refname}{\textup{References}}
\maketitle
\begin{abstract}
If $\Gamma$ is the range of a Jordan curve that bounds a convex set in $\mathbb{R}^2,$ then $\frac{1}{2}(\Gamma+\Gamma)=\mathsf{co}(\Gamma),$ where $+$ is the Minkowski sum and $\mathsf{co}$ is the convex hull.
Answering a question of V.N.\,Ushakov, we construct a simple closed curve in $\mathbb{R}^3$ with range $\Gamma$ such that $\frac{1}{2}(\Gamma+\Gamma)=[0,1]^3=\mathsf{co}(\Gamma).$
Also we show that such simple closed curve cannot be rectifiable.
\end{abstract}

\begin{center}
\includegraphics[scale=0.60]{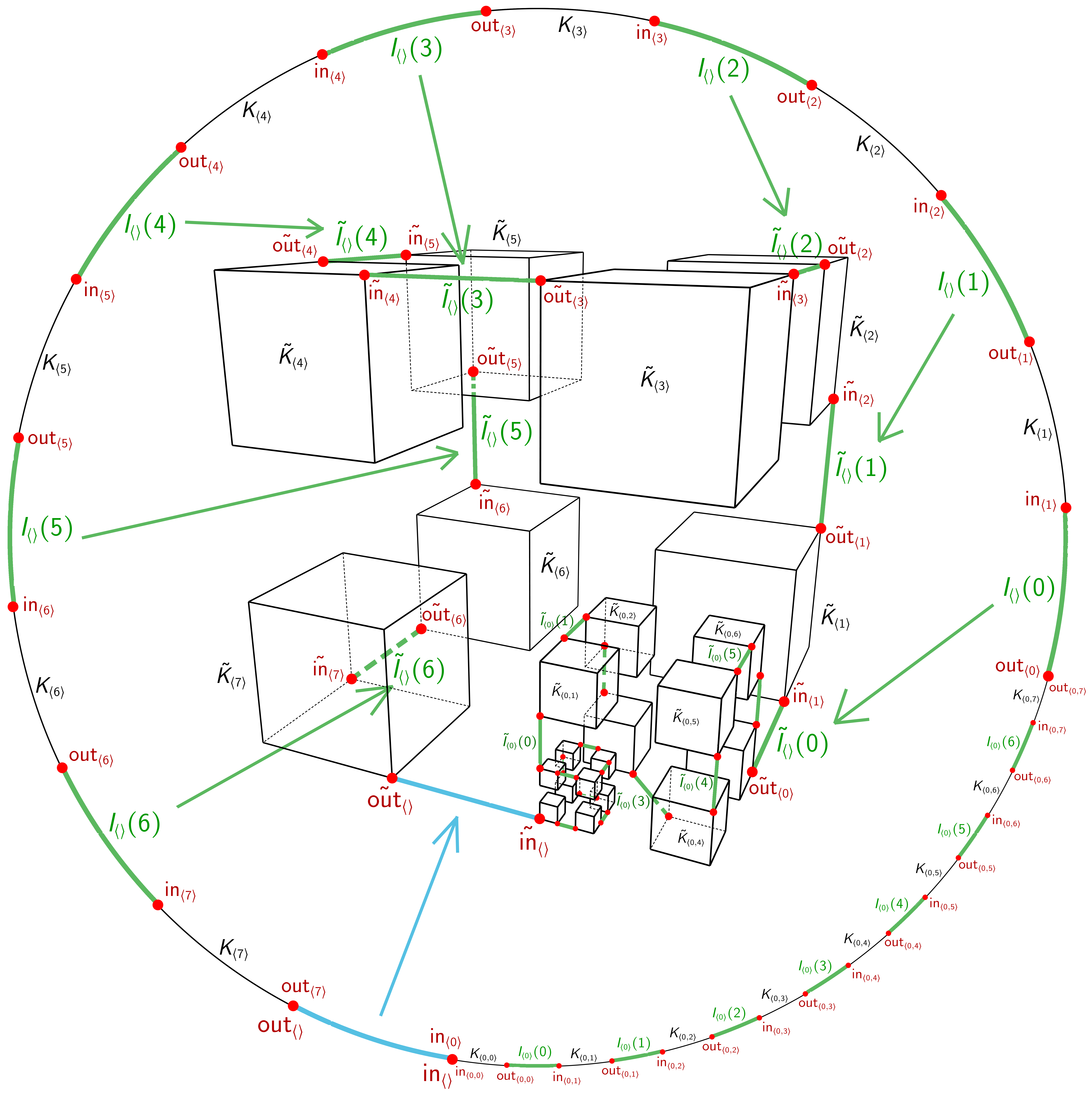}
\end{center}

\section{Introduction}
\label{section0}

Recall that the Minkowski sum ${A}+{B}$ of two sets in the $\mathsurround=0pt{n}$-dimensional Euclidean space $\mathbb{R}^{n}$ is the set $\{{x}+{y}:{x}\in{A},{y}\in{B}\};$ recall also that ${r}{A}\coloneq\{{r}{x}:{x}\in{A}\}$ for ${r}\in\mathbb{R}.$
It follows from the Shapley---Folkman theorem~\cite{starr} that the sequence
$$
\textstyle\Gamma,\ \frac{1}{2}(\Gamma+\Gamma),\ \frac{1}{3}(\Gamma+\Gamma+\Gamma),\ \frac{1}{4}(\Gamma+\Gamma+\Gamma+\Gamma), \ \ldots
$$
converges in the Hausdorff metric to the convex hull $\mathsf{co}(\Gamma)$ of $\Gamma$ for any bounded set $\Gamma\subseteq\mathbb{R}^{n}.$

If $\Gamma$ is the range of a simple closed curve that bounds a convex set in $\mathbb{R}^2,$  then already the second term in this sequence reaches the limit, so that $\frac{1}{2}(\Gamma+\Gamma)=\mathsf{co}(\Gamma)$ for such $\Gamma.$
Recently V.N.\,Ushakov studied the rate of convergence of the above sequence~\cite{ushakov}  and he asked the following question:

\begin{que}\label{q}
\textit{Is there a simple closed curve in $\mathbb{R}^3$ whose range $\Gamma$ does not lie in any plane and such that $\frac{1}{2}(\Gamma+\Gamma)=\mathsf{co}(\Gamma)$?}
\end{que}

Note that $\Gamma$ does not lie in any plane if and only if $\mathsf{co}(\Gamma)$ contains a tetrahedron.
Since $\Gamma$ in this question is a homeomorphic image of the unit circle $\mathbb{S}^1,$ it follows that $\Gamma+\Gamma$ is a continuous image of the torus $\mathbb{S}^1\!\times\mathbb{S}^1.$ Therefore in smooth cases $\frac{1}{2}(\Gamma+\Gamma)$ looks like a two-dimensional manifold with self-intersections, so it should not contain a tetrahedron. Indeed, we show that even in the rectifiable case the answer to the above question is negative:

\begin{teo}\label{rem}
   If $\hspace{1pt}\Gamma$ is the range of a rectifiable curve in $\hspace{1pt}\mathbb{R}^3,$
   then $\hspace{1pt}\frac{1}{2}(\Gamma+\Gamma)$ has zero Lebesgue measure in $\hspace{1pt}\mathbb{R}^3.$
\end{teo}

\noindent On the other hand, we build an example which shows that in the general case the answer is positive:

\begin{teo}\label{th}
   There is a simple closed curve in $\hspace{1pt}\mathbb{R}^3$ whose range $\hspace{1pt}\Gamma$ satisfies
   $\hspace{1pt}\frac{1}{2}(\Gamma+\Gamma)=[0,1]^3=\mathsf{co}(\Gamma).$
\end{teo}

\section{Proofs}
\label{section1}

\begin{proof}[\textbf{\textup{Proof of Theorem\,\ref{rem}}}]
We denote by $\omega$ the set of natural numbers, by $\mathsf{diam}(A)$ the diameter of a set ${A},$ and by ${f}{\upharpoonright}\hspace{0.7pt}{A}$ the restriction of a function $f$ to a set ${A}.$
Suppose that $\gamma\colon[a,b]\to\mathbb{R}^3$ is a rectifiable curve~\cite{rudin} and $\Gamma=\mathsf{range}(\gamma).$
Let $\varepsilon>0.$ Fist we build a sequence $\langle{t}_{{i}}\rangle_{{i}\in\omega}$ of points in the segment $[{a},{b}]$ by recursion on ${i}\in\omega$:\\
\mbox{}\ \ Base:\ \
   ${t}_{0}\coloneq{a}.$\\
\mbox{}\ \ Step:\ \
   ${t}_{{i}+1}\coloneq
   \mathsf{sup}\big\{{u}\in[{t}_{{i}},{b}]:
   \mathsf{diam}\big(\gamma([{t}_{{i}},{u}])\big)<\varepsilon\big\}\quad$
   for all ${i}\in\omega.$\\
Note that if ${t}_{{i}}<{b},$ then ${t}_{{i}+1}>{t}_{{i}}$ because $\gamma$ is continuous.
Note also that
$\mathsf{diam}\big(\gamma([{t}_{{i}},{t}_{{i}+1}])\big)
=\varepsilon$ for all ${i}\in\omega$ such that ${t}_{{i}+1}<{b},$
and then $\mathsf{length}(\gamma{\upharpoonright}\hspace{0.7pt}[{t}_{{i}},{t}_{{i}+1}])\geqslant\varepsilon$ for all such ${i}.$ Since $\gamma$ is a rectifiable curve, the last inequality implies that there is ${i}\in\omega$ such that ${t}_{{i}+1}={b};$ let ${n}$ be the first such ${i}.$
Put ${L}_{{i}}\coloneq
\gamma([{t}_{{i}},{t}_{{i}+1}])$ for all ${i}\leqslant{n}.$

We have $\mathsf{diam}({L}_{{i}})=\varepsilon$ for all ${{i}}<{n}$ and
$\mathsf{diam}({L}_{{n}})\leqslant\varepsilon.$
It follows that $\mathsf{diam}({L}_{{i}}+{L}_{{j}})
\leqslant2\varepsilon$ for all ${i},{j}\leqslant{n}$ because $\mathsf{diam}({A}+{B})\leqslant\mathsf{diam}({A})+\mathsf{diam}({B})$
for any ${A},{B}\subseteq\mathbb{R}^{3}$ (indeed, if ${a}_{1},{a}_{2}\in{A}$ and ${b}_{1},{b}_{2}\in{B},$ then $\rho({a}_{1}+{b}_{1},{a}_{2}+{b}_{2})\leqslant
\rho({a}_{1}+{b}_{1},{a}_{1}+{b}_{2})+\rho({a}_{1}+{b}_{2},{a}_{2}+{b}_{2})=
\rho({b}_{1},{b}_{2})+\rho({a}_{1},{a}_{2})\leqslant
\mathsf{diam}({B})+\mathsf{diam}({A})$).
Then $\mu_{3}^\ast({L}_{{i}}+{L}_{{j}})
<(2\cdot2\varepsilon)^3=64\varepsilon^3$ for all ${{i},{j}\leqslant{n}},$
where $\mu_{3}^\ast$ is the Lebesgue outer measure in $\mathbb{R}^3.$

Now we have
$$\length(\gamma)\:\geqslant
\sum_{0\leqslant{i}\leqslant{n}}\mathsf{diam}({L}_{{i}})\:\geqslant
\sum_{0\leqslant{i}<{n}}\mathsf{diam}({L}_{{i}})\:=\:
\varepsilon{n}\:=\:
\varepsilon({n}+1)-\varepsilon,
$$
so
${n}+1\,\leqslant\,\varepsilon^{{-}1}\cdot\length(\gamma)+1.$
Then
\begin{multline*}
\mu_{3}^\ast\big({\textstyle\frac{1}{2}(\Gamma+\Gamma)}\big)\:\leqslant\:
\mu_{3}^\ast(\Gamma+\Gamma)\:=\:
\mu_{3}^\ast\big(\!\!\!\bigcup_{0\leqslant{i}\leqslant{n}}\!\!\!{L}_{{i}}
+\!\!\!\bigcup_{0\leqslant{j}\leqslant{n}}\!\!\!{L}_{{j}}
\big)\:=\:
\mu_{3}^\ast\big(\!\!\!\bigcup_{0\leqslant{i},{j}\leqslant{n}}\!\!\!
({L}_{{i}}+{L}_{{j}})\big)\:\leqslant\:
\sum_{0\leqslant{i},{j}\leqslant{n}}
\mu_{3}^\ast({L}_{{i}}+{L}_{{j}})\:<\\
<\:64\varepsilon^3({n}+1)^2\:\leqslant\:
64\varepsilon^3\big(\varepsilon^{{-}1}\cdot\length(\gamma)+1\big)^{2}\:=\:
64\varepsilon\big(\length(\gamma)^2+2\varepsilon\length(\gamma)+\varepsilon^2\big).
\end{multline*}
Since $\varepsilon$ can be arbitrarily small, it follows that the set $\frac{1}{2}(\Gamma+\Gamma)$ has zero Lebesgue measure in $\mathbb{R}^3.$
\end{proof}
\medskip

Let $\mathbb{C}\subseteq[0,1]$ be the Cantor set~\cite{rudin}, also known as the Cantor ternary set or as the middle third Cantor set.

\begin{lem}\label{lem1}
   If $\hspace{1pt}\hspace{1pt}\mathbb{C}^3\subseteq\Gamma\subseteq[0,1]^3,$ then $\hspace{1pt}\frac{1}{2}(\Gamma+\Gamma)=[0,1]^3=\mathsf{co}(\Gamma).$
\end{lem}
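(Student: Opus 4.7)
The plan is to reduce everything to the classical fact that $\mathbb{C}+\mathbb{C}=[0,2]$, which follows from the base-3 representation of elements of $[0,2]$ (each $x\in[0,2]$ can be written as a sum of two numbers whose ternary expansions use only digits $0$ and $2$). I would cite this or sketch the one-line proof, since it is textbook material.

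Next, I would observe that the Minkowski sum distributes over Cartesian products: for any sets $A_i,B_i\subseteq\mathbb{R}$,
\[
(A_1\times A_2\times A_3)+(B_1\times B_2\times B_3)=(A_1+B_1)\times(A_2+B_2)\times(A_3+B_3),
\]
which is immediate from the coordinatewise definition of addition. Applying this with $A_i=B_i=\mathbb{C}$ gives
\[
\mathbb{C}^3+\mathbb{C}^3=(\mathbb{C}+\mathbb{C})^3=[0,2]^3,
\]
so $\tfrac12(\mathbb{C}^3+\mathbb{C}^3)=[0,1]^3$.

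From this the lemma follows by a two-sided sandwich. For the Minkowski half-sum, monotonicity of $+$ under set inclusion gives
\[
[0,1]^3=\tfrac12(\mathbb{C}^3+\mathbb{C}^3)\subseteq\tfrac12(\Gamma+\Gamma)\subseteq\tfrac12\bigl([0,1]^3+[0,1]^3\bigr)=[0,1]^3.
\]
For the convex hull, note that $\{0,1\}\subseteq\mathbb{C}$, so $\mathbb{C}^3$ contains the eight vertices of $[0,1]^3$; hence
\[
[0,1]^3=\mathsf{co}(\{0,1\}^3)\subseteq\mathsf{co}(\mathbb{C}^3)\subseteq\mathsf{co}(\Gamma)\subseteq\mathsf{co}([0,1]^3)=[0,1]^3.
\]

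There is no real obstacle here; the only nontrivial input is $\mathbb{C}+\mathbb{C}=[0,2]$, and once that is granted the rest is bookkeeping with inclusions and the product formula for Minkowski sums. The point of the lemma is precisely to isolate this soft observation so that in the proof of Theorem \ref{th} one only has to construct a simple closed curve in $[0,1]^3$ whose range contains $\mathbb{C}^3$.
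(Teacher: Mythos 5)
Your proof is correct and uses the same key inputs as the paper: the classical identity $\mathbb{C}+\mathbb{C}=[0,2]$ and the distributivity of Minkowski sum over Cartesian products. The only organizational difference is how the final sandwich is assembled. The paper chains both conclusions through a single sequence of inclusions, using the general fact $\tfrac12(A+A)\subseteq\mathsf{co}(A)$ as the link between the half-sum and the convex hull, so it writes $[0,1]^3=\tfrac12(\mathbb{C}^3+\mathbb{C}^3)\subseteq\tfrac12(\Gamma+\Gamma)\subseteq\mathsf{co}(\Gamma)\subseteq[0,1]^3$ and gets both equalities at once. You instead handle the two equalities by two separate sandwiches, using monotonicity plus $\tfrac12([0,1]^3+[0,1]^3)=[0,1]^3$ for the half-sum and the observation $\{0,1\}^3\subseteq\mathbb{C}^3$ for the convex hull; the latter observation is a small alternative to invoking $\tfrac12(A+A)\subseteq\mathsf{co}(A)$. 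Both routes are equally elementary; the paper's version is marginally more compact, while yours makes the convex-hull claim independent of the Minkowski-sum machinery.
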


\begin{lem}\label{lem2}
   There exists a simple closed curve in $\hspace{1pt}\mathbb{R}^3$ whose range $\hspace{1pt}\Gamma$ satisfies
   $\hspace{1pt}\mathbb{C}^3\subseteq\Gamma\subseteq[0,1]^3.$
\end{lem}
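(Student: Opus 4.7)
The plan is to realise $\gamma$ as a limit of polygonal Hilbert-type approximations that visit the sub-cubes of $\mathbb{C}^3$ in a compatible cyclic order. For $m\in\omega,$ let $\mathcal{Q}_m$ be the family of $8^m$ closed sub-cubes of side $3^{-m}$ from the standard construction of $\mathbb{C}^3,$ so each cube in $\mathcal{Q}_m$ is the disjoint union of its $8$ children in $\mathcal{Q}_{m+1}.$ The first step is to construct, by recursion on $m,$ a cyclic enumeration $\Pi_m$ of $\mathcal{Q}_m$ with the two properties: (a)~the $8$ children of any $Q\in\mathcal{Q}_m$ occupy $8$ consecutive positions in $\Pi_{m+1},$ and (b)~any two cyclically consecutive cubes of $\Pi_m$ are geometrically adjacent, in the sense that their ternary position codes differ in exactly one coordinate. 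The binary reflected Gray code on $\{0,1\}^3$ supplies a Hamiltonian cycle on the eight children of any single parent; by fixing a finite template that prescribes, for each child, an ``entry face'' and an ``exit face'' as a function of the parent's own entry/exit data, one glues these local cycles into a single cycle $\Pi_m$ at every scale.

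Next I would introduce a Cantor-type subset $K\subseteq\mathbb{S}^1$ partitioned recursively into $8^m$ closed arcs at stage $m,$ labelled according to $\Pi_m,$ so that the natural stage-by-stage bijection defines in the limit a homeomorphism $h\colon K\to\mathbb{C}^3.$ The set $\mathbb{S}^1\setminus K$ is a countable disjoint union of open ``gap'' arcs, each adjacent to two cyclically consecutive stage-$m$ arcs of $K$ for a unique $m.$ I would extend $h$ over each such gap by a short polygonal arc in $[0,1]^3$ joining the corresponding endpoint images and lying in a thin neighbourhood of the face between the two associated stage-$m$ sub-cubes. Because $[0,1]^3$ is convex, these connecting arcs can be taken as straight line segments inside $[0,1]^3.$ The result is a continuous map $\gamma\colon\mathbb{S}^1\to[0,1]^3$ whose image contains $\mathbb{C}^3.$

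The main obstacle is the injectivity of $\gamma,$ that is, showing that distinct connecting arcs do not meet one another and that none of them meets $\mathbb{C}^3$ except at its prescribed endpoints. I would handle this by scale-separation: the connecting arc for a stage-$m$ gap is confined to a $\varepsilon_m$-neighbourhood of the face shared by its two stage-$m$ sub-cubes, with $\varepsilon_m$ chosen so that $\varepsilon_m\ll 3^{-m};$ all connecting arcs inside a parent cube $Q\in\mathcal{Q}_m$ remain in a slight thickening of $Q$ and so cannot meet the stage-$m$ arc joining $Q$ to its $\Pi_m$-neighbour; two arcs at the same scale $m$ are separated by the geometry of $\Pi_m$ together with the thinness of the tubes. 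Continuity of $\gamma$ then follows from the $O(3^{-m})$ bound on the diameter of the image of each stage-$m$ arc together with its adjacent gaps. Since $\Pi_m$ is a cycle for every $m,$ the parameter space is genuinely $\mathbb{S}^1,$ and hence $\gamma$ parametrises a simple closed curve whose range $\Gamma$ satisfies $\mathbb{C}^3\subseteq\Gamma\subseteq[0,1]^3,$ as required.
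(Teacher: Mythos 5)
Your proposal shares the paper's basic architecture --- a Cantor-type parameter set inside $\mathbb{S}^1$ split into $8^m$ closed arcs at level $m$, mapped onto the $8^m$ corner sub-cubes of the Cantor cube, with the countable family of gap arcs sent to connecting arcs --- so in outline the two proofs coincide. Where you diverge is in the mechanism for injectivity: you insist on a Gray-code, Hilbert-style cyclic ordering in which consecutive sub-cubes have position codes differing in exactly one coordinate, and you route the connecting arcs through thin tubes near a ``shared face.'' The paper imposes no geometric adjacency whatsoever on consecutive sub-cubes: it selects a pair of entry/exit \emph{vertices} for each sub-cube and joins consecutive sub-cubes by the straight segment between the relevant vertices, with a short explicit list of disjointness constraints that the choice of vertices must satisfy at each recursion step. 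Your extra adjacency requirement is not wrong, but it imports the nontrivial Hilbert-curve bookkeeping of propagating entry/exit faces consistently through all scales, a step you only gesture at, and it buys nothing beyond what the paper's weaker vertex-pairing condition already delivers. Two further cautions: first, the phrase ``the face between the two associated stage-$m$ sub-cubes'' is imprecise, since the eight corner sub-cubes at any level are pairwise disjoint closed cubes separated by removed middle thirds, so no two of them share a face, and your tubes must cross a gap rather than hug a common face; second, your scale-separation argument for disjointness of arcs is plausible but remains a sketch, whereas the paper's finite list of conditions (disjointness of the straight segments from each other and from the sub-cubes) is concrete and can be checked directly at each step of the recursion.
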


\begin{proof}[\textbf{\textup{Proof of Theorem\,\ref{th}}}]
Theorem\,\ref{th} immediately follows from the above lemmas.
\end{proof}

\begin{proof}[\textbf{\textup{Proof of Lemma\,\ref{lem1}}}]
Suppose that $\hspace{1pt}\mathbb{C}^3\subseteq\Gamma\subseteq[0,1]^3.$
It is well known~\cite{rudin} that the Cantor set $\mathbb{C}$ equals the set of those numbers in $[0,1]$ that have only 0's and 2's in their ternary expansion; that is,
$\mathbb{C}=\big\{\sum_{{n}\in\omega}({a}_{n}\cdot3^{-{n}-1}):
\langle{a}_{n}\rangle_{{n}\in\omega}\in{}^\omega\{0,2\}\big\},$
where
${}^\omega\!{A}$ is the set of infinite sequences in ${A}.$ Also it is known that $\mathbb{C}+\mathbb{C}=[0,2];$
indeed,
$${\textstyle\frac{\mathbb{C}}{2}}=
\big\{\sum_{{n}\in\omega}{(\textstyle\frac{{a}_{n}}{2}\cdot3^{-{n}-1})}:
\langle{a}_{n}\rangle_{{n}\in\omega}\in{}^\omega\{0,2\}\big\}=
\big\{\sum_{{n}\in\omega}({b}_{n}\cdot3^{-{n}-1}):
\langle{b}_{n}\rangle_{{n}\in\omega}\in{}^\omega\{0,1\}\big\},\quad\text{so}$$
$$
{\textstyle\frac{\mathbb{C}}{2}+{\frac{\mathbb{C}}{2}}}=
\big\{\sum_{{n}\in\omega}\big(({b}_{n}{+}\hspace{1pt}{c}_{n})\cdot3^{-{n}-1}\big):
\langle{b}_{n}\rangle_{{n}\in\omega},
\langle{c}_{n}\rangle_{{n}\in\omega}\in{}^\omega\{0,1\}\big\}=
\big\{\sum_{{n}\in\omega}({d}_{n}\cdot3^{-{n}-1}):
\langle{d}_{n}\rangle_{{n}\in\omega}\in{}^\omega\{0,1,2\}\big\}=[0,1],
$$
and hence $\mathbb{C}+\mathbb{C}=
2\big(\frac{\mathbb{C}}{2}+\frac{\mathbb{C}}{2}\big)=
2[0,1]=[0,2].$

Recall that $\frac{1}{2}({A}+{A})\subseteq\mathsf{co}({A})$ for any ${A}\subseteq\mathbb{R}^{3}$ and
$({B}+{B})^3={B}^3+{B}^3$ for any ${B}\subseteq\mathbb{R}.$
This is because
$\frac{1}{2}({A}+{A})=\big\{\frac{{x}+{y}}{2}:{x},{y}\in{A}\big\}\subseteq
\bigcup\{[{x},{y}]:{x},{y}\in{A}\}\subseteq\mathsf{co}({A})$ and
$({B}+{B})^3
=\{\langle{x}_{1}+{y}_{1},{x}_{2}+{y}_{2},{x}_{3}+{y}_{3}\rangle:{x}_{{i}},{y}_{{i}}\in{B}\}=
\{\langle{x}_{1},{x}_{2},{x}_{3}\rangle+
\langle{y}_{1},{y}_{2},{y}_{3}\rangle:{x}_{{i}},{y}_{{i}}\in{B}\}=
{B}^3+{B}^3$.

Now, using all above, we have
$$\textstyle[0,1]^3=\frac{1}{2}[0,2]^3=
\frac{1}{2}(\mathbb{C}+\mathbb{C})^3=
\frac{1}{2}(\mathbb{C}^3+\mathbb{C}^3)\subseteq
\frac{1}{2}(\Gamma+\Gamma)\subseteq
\mathsf{co}(\Gamma)\subseteq\mathsf{co}([0,1]^3)=[0,1]^3.
$$%
\end{proof}

We shall use the following notation in the proof of Lemma\,\ref{lem2}:

   \begin{itemize}
   \item [\ding{46}\,]
      $\mathsurround=0pt
      {A}\,=\,\bigsqcup_{{\lambda}\in{\Lambda}}{B}_{\lambda}
      \quad$ means
      $\quad{A}\,=\,\bigcup_{{\lambda}\in{\Lambda}}{B}_{\lambda}\enskip\mathsf{and}\enskip
      \forall{\lambda}\,{\neq}\,{\lambda}'\,{\in}\,{\Lambda}
      \;[\,{B}_{\lambda}\cap{B}_{{\lambda}'}=\varnothing\,];$
   \item [\ding{46}\,]
      $\mathsurround=0pt
      {A}\,=\,{B}_{0}\sqcup\ldots\sqcup {B}_{n}\quad$ means
      $\quad{A}\,=\,\bigsqcup_{\,{i}\in\{{0},\ldots,{n}\}}{B}_{i}\,;$
   \item [\ding{46}\,]
      $\mathsurround=0pt
      \omega$ $\coloneq$ the set of finite ordinals $=$ the set of natural numbers,

      so $0=\varnothing\in\omega$ and ${n}=\{0,\ldots,{n}-1\}$ for all ${n}\in\omega;$
   \item [\ding{46}\,]
      $\mathsurround=0pt
      {f}{\upharpoonright}\hspace{0.7pt}{A}$ $\coloneq$ the restriction of function $f$ to ${A};$
   \item [\ding{46}\,]
      $\mathsurround=0pt
      {}^{\alpha}\!{A}$ $\coloneq$ the set of functions from $\alpha$ to ${A};$
   \item [\ding{46}\,]
      $\mathsurround=0pt
      {}^{{<}\omega}\hspace{-1pt}{A}
      \,\coloneq\,\bigcup_{{n}\in\omega}{}^{{n}}\hspace{-1pt}{A}.$
   \end{itemize}
We say that ${s}$ is a \textit{sequence} iff ${s}$ is a function such that $\mathsf{domain}({s})\in\omega$ or $\mathsf{domain}({s})=\omega.$
Therefore ${}^{{<}\omega}\hspace{-1pt}{A}$ is the set of finite sequences in ${A}.$
By definition, the \textit{length} of a sequence ${s},$ denoted by $\lh({s}),$ is the domain of ${s}.$
We use the following notation when work with sequences:

   \begin{itemize}
   \item [\ding{46}\,]
      $\mathsurround=0pt
      \langle{s}_0,\ldots,{s}_{{n}-1}\rangle$ $\coloneq$
      the sequence ${s}$ of length ${n}\in\omega$ such that ${s}(i)={s}_{i}$ for all ${i}\in{n};$
   \item [\ding{46}\,]
      $\mathsurround=0pt
      \langle\rangle$ $\coloneq$ the sequence of length {0};
   \item [\ding{46}\,]
      $\mathsurround=0pt
      \langle{x}\rangle^{\alpha}$
      $\coloneq$ the sequence ${s}$ of length $\alpha$ such that ${s}_{{i}}={x}$ for all ${i}\in\alpha;$
   \item [\ding{46}\,]
      $\mathsurround=0pt
      {s}\hat{\ }{t}$ $\coloneq$ the concatenation of a finite sequence ${s}$ and a sequence ${t};$

      that is, ${s}\hat{\ }{t}$ is the sequence ${r}$ of length $\lh({s})+\lh({t})\leqslant\omega$ such that ${r}_{{i}}={s}_{{i}}$ for all ${i}\in\lh({s})$ and ${r}_{\lh({s})+{i}}={t}_{{i}}$ for all ${i}\in\lh({t});$
   \item [\ding{46}\,]
      $\mathsurround=0pt
      {s}\hspace{0.5pt}^{\frown}{x}\,\coloneq\,
      {s}\hspace{0.5pt}\hat{\ }\langle{x}\rangle,$ where ${s}$ is a finite sequence.
   \end{itemize}
Here are some examples of usage of the above notation:
$$
\lh(\langle2,5,9,7\rangle)=4=\{0,1,2,3\};\qquad
\lh(\langle\rangle)=0=\varnothing;\qquad
\lh(\langle3\rangle^\omega)=\omega;
$$
$$
\langle2,5,9,7\rangle{\upharpoonright}\hspace{0.7pt}\,{3}\,=\,
\langle2\rangle\hat{\ }\langle5,9\rangle\,=\,
\langle2,5,9\rangle\hat{\ }\langle\rangle\,=\,
\langle2,5\rangle^\frown{9}\,=\,
\langle2,5,9\rangle;\qquad
\langle2,5,9\rangle{\upharpoonright}\hspace{0.7pt}\,0=\langle\rangle;
$$
$$
\langle3\rangle^{4}=\langle3,3,3,3\rangle;\qquad
\langle3\rangle^{0}=\langle\rangle;\qquad
\langle3,3\rangle\hat{\ }\langle3\rangle^\omega=\langle3\rangle^\omega;
$$
$$
{}^{1}\hspace{-1pt}{4}=
\big\{\langle0\rangle,\langle1\rangle,\langle2\rangle,\langle3\rangle\big\};\qquad
{}^{0}4=\big\{\langle\rangle\big\};\qquad
{}^{4}0=\varnothing;
$$
$$
{}^{3}\hspace{-1pt}{1}=\big\{\langle0,0,0\rangle\big\};\qquad
{}^\omega{1}=\big\{\langle0\rangle^\omega\big\};\qquad
{}^{{<}\omega}1=\big\{\langle\rangle,\langle0\rangle,\langle0,0\rangle,\ldots\big\}.
$$

\begin{proof}[\textbf{\textup{Proof of Lemma\,\ref{lem2}}}]
We must build a simple closed curve in $\mathbb{R}^3$ whose range $\Gamma$ satisfies
$\mathbb{C}^3\subseteq\Gamma\subseteq[0,1]^3.$ Recall that a \textit{simple closed curve} 
in a space ${X}$ is a continuous injection from the unit circle $\mathbb{S}^1\hspace{-1pt}\subseteq\mathbb{R}^2$ to ${X}.$ (In some books ``simple closed curve'' means ``a space homeomorphic to $\mathbb{S}^1$''; this is because every simple closed curve $\gamma$ in a Hausdorff space is a homeomorphism between $\mathbb{S}^1$ and the range of $\gamma$.)

First we build two indexed families
$\langle{\inn}_{{s}}\rangle_{{s}\in{}^{{<}\omega}{8}}$ and
$\langle{\outt}_{{s}}\rangle_{{s}\in{}^{{<}\omega}{8}}$ of points in $\mathbb{R},$
an indexed family
$\langle{K}_{{s}}\rangle_{{s}\in{}^{{<}\omega}{8}}$ of segments in $\mathbb{R},$
and an indexed family
$\langle{I}_{{s}}({j})\rangle_{\langle{s},{j}\rangle\in{}^{{<}\omega}{8}\times{7}}$ of open intervals in $\mathbb{R}$
by recursion on $\lh({s})\in\omega:$

\begin{itemize}
\item[\textup{(a1)}]
$\inn_{\langle\rangle}\coloneq{0}\in\mathbb{R},$
   $\outt_{\langle\rangle}\coloneq{1}\in\mathbb{R},$
   ${K}_{\langle\rangle}\coloneq
   [\inn_{\langle\rangle},\outt_{\langle\rangle}]=
   [0,1]\subseteq\mathbb{R};$
\item[\textup{(a2)}]
   $\inn_{{s}^\frown{i}}\coloneq
   \inn_{{s}}+\frac{2{i}}{15}(\outt_{{s}}-\inn_{{s}})\in{K}_{{s}}\quad$
   for all ${s}\in{}^{{<}\omega}{8}$ and ${i}\in{8};$
\item[\textup{(a3)}]
   $\outt_{{s}^\frown{i}}\coloneq
   \inn_{{s}^\frown{i}}+\frac{1}{15}(\outt_{{s}}-\inn_{{s}})\in{K}_{{s}}\quad$
   for all ${s}\in{}^{{<}\omega}{8}$ and ${i}\in{8};$
\item[\textup{(a4)}]
   ${K}_{{s}^\frown{i}}\coloneq
   [\inn_{{s}^\frown{i}},\outt_{{s}^\frown{i}}]
   \subseteq{K}_{{s}}\quad$
   for all ${s}\in{}^{{<}\omega}{8}$  and ${i}\in{8};$
\item[\textup{(a5)}]
   ${I}_{{s}}({j})\coloneq
   (\outt_{{s}^\frown{j}},\inn_{{s}^\frown({j}{+}{1})})
   \subseteq{K}_{{s}}\quad$
   for all ${s}\in{}^{{<}\omega}{8}$ and ${j}\in{7}.$
\end{itemize}
So, for each ${s}\in{}^{{<}\omega}{8},$ we have 16 consecutive points $\inn_{{s}}=\inn_{{s}^\frown{0}}
<\outt_{{s}^\frown{0}}<\ldots<\inn_{{s}^\frown{7}}
<\outt_{{s}^\frown{7}}=\outt_{{s}},$ which
cut the segment ${K}_{{s}}=[\inn_{{s}},\outt_{{s}}]$
into 15 consecutive intervals
${K}_{{s}^\frown{0}},{I}_{{s}}({0}),{K}_{{s}^\frown{1}},\ldots,
{I}_{{s}}({6}),{K}_{{s}^\frown{7}}$ of length
$\frac{1}{15}(\outt_{{s}}-\inn_{{s}})$ with
${K}_{{s}^\frown{i}}$ closed intervals and ${I}_{{s}}({j})$ open intervals.
\medskip

We shall use the following properties of these points and intervals:

\begin{itemize}
\item[\textup{(1)}]
   ${\inn}_{{s}}={\inn}_{{s}\hat{\ }\langle{0}\rangle^{n}}$ and
   ${\outt}_{{s}}={\outt}_{{s}\hat{\ }\langle{7}\rangle^{n}}\quad$
   for all ${s}\in{}^{{<}\omega}{8}$ and ${n}\in\omega;$
\item[\textup{(2)}]
   ${K}_{{s}}\:=\;
   \bigsqcup_{{j}\in{7}}{I}_{{s}}({j})\:\sqcup\;
   \bigsqcup_{{i}\in{8}}{K}_{{s}^\frown{i}}\quad$
   for all ${s}\in{}^{{<}\omega}{8}.$
\end{itemize}
\medskip

For each ${x}\in{}^{\omega}{8},$ the set $\bigcap_{{n}\in\omega}{K}_{{x}{\upharpoonright}\hspace{0.7pt}{n}}$
is a singleton, and we define

\begin{itemize}
\item[\textup{(a6)}]
   ${k}_{{x}}\coloneq$ the point in $\mathbb{R}$ such that $\{{k}_{{x}}\}=\bigcap_{{n}\in\omega}{K}_{{x}{\upharpoonright}\hspace{0.7pt}{n}}\quad$
   for all ${x}\in{}^{\omega}{8}.$
\end{itemize}
It follows from (a1) and (a4) that ${\inn}_{{t}},{\outt}_{{t}}\in{K}_{{t}}$ for all ${t}\in{}^{{<}\omega}{8},$
so using (1) we have
${\inn}_{{s}}={\inn}_{{s}\hat{\ }\langle{0}\rangle^{{n}}}\in
{K}_{{s}\hat{\ }\langle{0}\rangle^{{n}}}$ and
${\outt}_{{s}}={\outt}_{{s}\hat{\ }\langle{0}\rangle^{{n}}}\in
{K}_{{s}\hat{\ }\langle{0}\rangle^{{n}}}$ for all ${s}\in{}^{{<}\omega}{8}$ and ${n}\in\omega,$ therefore

\begin{itemize}
\item[\textup{(3)}]
   ${\inn}_{{s}}={k}_{{s}\hat{\ }\langle{0}\rangle^\omega}$ and
   ${\outt}_{{s}}={k}_{{s}\hat{\ }\langle{7}\rangle^\omega}\quad$
   for all ${s}\in{}^{{<}\omega}{8}.$
\end{itemize}
Also it follows from (2) that
\begin{itemize}
\item[\textup{(4)}]
   ${K}_{{s}}\:=\;
   \bigsqcup_{\langle{t},{j}\rangle\in{}^{{<}\omega}{8}\times{7}}
   {I}_{{s}\hat{\ }{t}}({j})\:\sqcup\:
   \bigsqcup_{{x}\in{}^{\omega}{8}}\{{k}_{{s}\hat{\ }{x}}\}\quad$
   for all ${s}\in{}^{<\omega}{8}.$
\end{itemize}
\medskip
%

Next we build two indexed families
$\langle\tilde{\inn}_{{s}}\rangle_{{s}\in{}^{{<}\omega}{8}}$ and
$\langle\tilde{\outt}_{{s}}\rangle_{{s}\in{}^{{<}\omega}{8}}$ of points in $\mathbb{R}^3,$
an indexed family
$\langle\tilde{K}_{{s}}\rangle_{{s}\in{}^{{<}\omega}{8}}$ of closed cubs in $\mathbb{R}^3,$
and an indexed family
$\langle\tilde{I}_{{s}}({j})\rangle_{\langle{s},{j}\rangle\in{}^{{<}\omega}{8}\times{7}}$
of 1-dimensional open intervals in $\mathbb{R}^3$
by recursion on $\lh({s})\in\omega:$

\begin{itemize}
\item[\textup{(b1)}]
   $\mathsurround=0pt\tilde{K}_{\langle\rangle}\coloneq[0,1]^{3}\subseteq\mathbb{R}^{3},$
   $\tilde{\inn}_{\langle\rangle}\coloneq\langle{\frac{1}{3}},0,0\rangle\in\tilde{K}_{\langle\rangle},$
   $\tilde{\outt}_{\langle\rangle}\coloneq\langle{\frac{2}{3}},0,0\rangle\in\tilde{K}_{\langle\rangle};$
\item[\textup{(b2)}]
   $\mathsurround=0pt\tilde{K}_{{s}^\frown{0}},\ldots,
   \tilde{K}_{{s}^\frown{7}}\subseteq\tilde{K}_{{s}}$ are the 8 corner closed cubs among 27 pairwise congruent cubs that we get when we cut the cube $\tilde{K}_{{s}}$ by three pairs of planes parallel to faces of $\tilde{K}_{{s}};$

   that is, if $\tilde{K}_{{s}}=\prod_{{l}\in{3}}[{a}_{{l}},{a}_{{l}}+\delta],$ then

   $\mathsurround=0pt\{\tilde{K}_{{s}^\frown{i}}:{i}\in{8}\}\:=\:
   \big\{\prod_{{l}\in{3}}
   [{a}_{{l}}+\frac{2\delta}{3}{h}_{{l}},
   {a}_{{l}}+\frac{2\delta}{3}{h}_{{l}}+\frac{\delta}{3}]\::\:
   {h}_{{0}},{h}_{{1}},{h}_{{2}}\in\{0,1\}\big\};$
\item[\textup{(b3)}]
   $\tilde{\inn}_{{s}^\frown{i}},\tilde{\outt}_{{s}^\frown{i}}\in\tilde{K}_{{s}}$
   are two different vertices of the cube $\tilde{K}_{{s}^\frown{i}}\quad$
   for all ${s}\in{}^{{<}\omega}{8}$ and ${i}\in{8};$
\item[\textup{(b4)}]
   $\tilde{I}_{{s}}({j})\coloneq
   ({\tilde{\outt}}_{{s}^\frown{j}},{\tilde{\inn}}_{{s}^\frown({j}{+}{1})})\subseteq\tilde{K}_{{s}}\quad$
   for all ${s}\in{}^{{<}\omega}{8}$ and ${j}\in{7},$

   where $({a},{b})\coloneq\mathsf{co}(\{{a},{b}\})\setminus\{{a},{b}\}$ is an open interval in $\mathbb{R}^3$ for ${a},{b}\in\mathbb{R}^3.$
\end{itemize}
At each step in this recursion we can enumerate the 8 cubs $\tilde{K}_{{s}^\frown{0}},\ldots,\tilde{K}_{{s}^\frown{7}}$ in such order and choose 8 pairs of their vertices $\tilde{\inn}_{{s}^\frown{i}},\tilde{\outt}_{{s}^\frown{i}}\in\tilde{K}_{{s}^\frown{i}}$ in such a way that:

\begin{itemize}
\item[\ding{226}\,]
   $\tilde{\inn}_{{s}^\frown{0}}=\tilde{\inn}_{{s}}$ and
   $\tilde{\outt}_{{s}^\frown{7}}=\tilde{\outt}_{{s}}\quad$
   for all ${s}\in{}^{{<}\omega}{8};$
\item[\ding{226}\,]
   $\tilde{I}_{{s}}({j})\cap\tilde{K}_{{s}^\frown{i}}=\varnothing\quad$
   for all ${s}\in{}^{{<}\omega}{8},$ ${j}\in{7},$ and ${i}\in{8};$
\item[\ding{226}\,]
   $\tilde{I}_{\langle\rangle}({j})\cap
   (\tilde{\outt}_{\langle\rangle},\tilde{\inn}_{\langle\rangle})=\varnothing\quad$
   for all ${j}\in{7};$
\item[\ding{226}\,]
   $\tilde{I}_{{s}}({j})\cap\tilde{I}_{{s}}({l})=\varnothing\quad$
   for all ${s}\in{}^{{<}\omega}{8}$ and ${j}\neq{l}\in{7}.$
\end{itemize}
(Alternatively, we could take $\tilde{I}_{{s}}({j})$ to be polygonal chains or arcs without endpoints to simplify an argument here; but this would complicate clause (c1) below.)

\medskip

We shall use the following properties of these cubs, intervals, and vertices:

\begin{itemize}
\item[\textup{(5)}]
   $\tilde{\inn}_{{s}}=\tilde{\inn}_{{s}\hat{\ }\langle{0}\rangle^{n}}$ and
   $\tilde{\outt}_{{s}}=\tilde{\outt}_{{s}\hat{\ }\langle{7}\rangle^{n}}\quad$
   for all ${s}\in{}^{{<}\omega}{8}$ and ${n}\in\omega;$
\item[\textup{(6)}]
   $\tilde{K}_{{s}^\frown{i}}\cap\tilde{K}_{{s}^\frown{l}}=\varnothing\quad$
   for all ${s}\in{}^{{<}\omega}{8}$ and $ {i}\neq{l}\in{8};$
\item[\textup{(7)}]
   $\tilde{I}_{{s}\hat{\ }{t}}({j})\subseteq\tilde{K}_{{s}}\quad$
   for all ${s},{t}\in{}^{{<}\omega}{8}$ and ${j}\in{7};$
\item[\textup{(8)}]
   $\tilde{I}_{{s}}({j})\cap\tilde{K}_{{t}}=\varnothing\quad$
   for all ${s},{t}\in{}^{{<}\omega}{8}$ and ${j}\in{7}$
   such that $\lh({t})>\lh({s});$
\item[\textup{(9)}]
   $\tilde{I}_{{s}}({j})\cap\tilde{I}_{{t}}({l})=\varnothing\quad$
   for all $\langle{s},{j}\rangle\neq\langle{t},{l}\rangle\in{}^{{<}\omega}{8}\times{7};$
\item[\textup{(10)}]
   $(\tilde{\outt}_{\langle\rangle},\tilde{\inn}_{\langle\rangle})
   \cap\tilde{K}_{{s}}=\varnothing\quad$
   for all ${s}\in{}^{{<}\omega}{8}$ such that ${s}\neq\langle\rangle;$
\item[\textup{(11)}]
   $(\tilde{\outt}_{\langle\rangle},\tilde{\inn}_{\langle\rangle})
   \cap\tilde{I}_{{s}}({j})=\varnothing\quad$
   for all ${s}\in{}^{{<}\omega}{8}$ and ${j}\in{7}.$
\end{itemize}
\medskip

For each ${x}\in{}^{\omega}{8},$ the set
$\bigcap_{{n}\in\omega}\tilde{K}_{{x}{\upharpoonright}\hspace{0.7pt}{n}}$ is a singleton, and we define

\begin{itemize}
\item[\textup{(b5)}]
   $\tilde{k}_{{x}}\coloneq$ the point in $\mathbb{R}^3$ such that $\{\tilde{k}_{{x}}\}=\bigcap_{{n}\in\omega}\tilde{K}_{{x}{\upharpoonright}\hspace{0.7pt}{n}}\quad$
   for all ${x}\in{}^{\omega}{8}.$
\end{itemize}
Since $\tilde{\inn}_{{t}},\tilde{\outt}_{{t}}\in\tilde{K}_{{t}}$ for all ${t}\in{}^{{<}\omega}{8}$ and since by (5) we have
$\tilde{\inn}_{{s}}=\tilde{\inn}_{{s}\hat{\ }\langle{0}\rangle^{{n}}}\in
\tilde{K}_{{s}\hat{\ }\langle{0}\rangle^{{n}}}$ and
$\tilde{\outt}_{{s}}=\tilde{\outt}_{{s}\hat{\ }\langle{0}\rangle^{{n}}}\in
\tilde{K}_{{s}\hat{\ }\langle{0}\rangle^{{n}}}$ for all ${s}\in{}^{{<}\omega}{8}$ and ${n}\in\omega,$ it follows that

\begin{itemize}
\item[\textup{(12)}]
   $\tilde{\inn}_{{s}}=\tilde{k}_{{s}\hat{\ }\langle{0}\rangle^\omega}$ and
   $\tilde{\outt}_{{s}}=\tilde{k}_{{s}\hat{\ }\langle{7}\rangle^\omega}\quad$
   for all ${s}\in{}^{{<}\omega}{8}.$
\end{itemize}
Also we have

\begin{itemize}
\item[\textup{(13)}]
   $\mathbb{C}^{3}=\{\tilde{k}_{{x}}:{x}\in{}^\omega{8}\},$ where $\mathbb{C}$ is the Cantor set.
\end{itemize}
\medskip

Now we build a mapping $\gamma$ from $[0,2\pi)\subseteq\mathbb{R}$ to $\mathbb{R}^3.$
Property (4) with ${s}=\langle\rangle$ says that

\begin{itemize}
\item[\textup{(14)}]
   $[0,1]\:=\;
   \bigsqcup_{\langle{t},{j}\rangle\in{}^{{<}\omega}{8}\times{7}}{I}_{{t}}({j})
   \:\sqcup\:\bigsqcup_{{x}\in{}^{\omega}{8}}\{{k}_{{x}}\},$
\end{itemize}
so we may specify the values of $\gamma$ independently on points of the interval $(1,2\pi)$ and on points of different members of disjoint union in (14).
For ${a}\neq{b}\in\mathbb{R}$ and ${c}\neq{d}\in\mathbb{R}^3,$ let $\mathsf{LIN}^{{a},{b}}_{{c},{d}}$ denote the linear mapping ${l}\colon[{a},{b}]\to[{c},{d}]\subseteq\mathbb{R}^3$ such that ${l}({a})={c}$ and ${l}({b})={d}.$
Recall that
${I}_{{s}}({j})=(\outt_{{s}^\frown{j}},\inn_{{s}^\frown({j}{+}{1})})$ and
$\tilde{I}_{{s}}({j})=({\tilde{\outt}}_{{s}^\frown{j}},{\tilde{\inn}}_{{s}^\frown({j}{+}{1})}).$ Let $\gamma$ be a function such that:

\begin{itemize}
\item[\textup{(c1)}]
   $\gamma{\upharpoonright}\hspace{0.7pt}{I}_{{s}}({j})=
   \mathsf{LIN}^{\outt_{{s}^\frown{j}},\inn_{{s}^\frown({j}{+}{1})}}
   _{{\tilde{\outt}}_{{s}^\frown{j}},{\tilde{\inn}}_{{s}^\frown({j}{+}{1})}}
   {\upharpoonright}\hspace{0.7pt}{I}_{{s}}({j})\quad$
   for all ${s}\in{}^{{<}\omega}{8}$ and $ {j}\in{7};$
\item[\textup{(c2)}]
   $\gamma{\upharpoonright}\hspace{0.7pt}({1},2\pi)=
   \mathsf{LIN}^{{1},2\pi}
   _{\tilde{\outt}_{\langle\rangle},\tilde{\inn}_{\langle\rangle}}
   {\upharpoonright}\hspace{0.7pt}({1},2\pi);$
\item[\textup{(c3)}]
   $\gamma({k}_{{x}})=\tilde{k}_{{x}}\quad$
   for all ${x}\in{}^{\omega}{8}.$
\end{itemize}
\medskip

The mapping $\gamma\colon[0,2\pi)\to\mathbb{R}^3$ has the following properties:

\begin{itemize}
\item[\textup{(15)}]
   $\gamma\big({I}_{{s}}({j})\big)=\tilde{I}_{{s}}({j})$
   and $\gamma{\upharpoonright}\hspace{0.7pt}{I}_{{s}}({j})$ is injection
   for all ${s}\in{}^{{<}\omega}{8}$ and $ {j}\in{7};$
\item[\textup{(16)}]
   $\gamma\big(({1},2\pi)\big)=
   (\tilde{\outt}_{\langle\rangle},\tilde{\inn}_{\langle\rangle})$ and $\gamma{\upharpoonright}\hspace{0.7pt}({1},2\pi)$ is injection;
\item[\textup{(17)}]
   $\gamma({K}_{{s}})\subseteq\tilde{K}_{{s}}\quad$
   for all ${s}\in{}^{{<}\omega}{8}$

   (this follows from (4), (15), (7), (c3) and from the fact that $\tilde{k}_{{s}\hat{\ }{x}}\in\tilde{K}_{{s}}$ for all ${x}\in{}^\omega{8}$);
\item[\textup{(18)}]
   $\gamma(\inn_{{s}})=\tilde{\inn}_{{s}}$ and
   $\gamma(\outt_{{s}})=\tilde{\outt}_{{s}}\quad$
   for all ${s}\in{}^{{<}\omega}{8}$

   (this follows from (3) and (12));
\item[\textup{(19)}]
   $\textsf{domain}(\gamma)\:=\:[0,2\pi)\:=\:
   ({1},2\pi)\:\sqcup\:
   \bigsqcup_{\langle{s},{j}\rangle\in{}^{{<}\omega}{8}\times{7}}{I}_{{s}}({j})\:\sqcup\:
   \bigsqcup_{{x}\in{}^{\omega}{8}}\{{k}_{{x}}\}$

   (this follows from (14));
\item[\textup{(20)}]
   $\textsf{range}(\gamma)\:=\:
   (\tilde{\outt}_{\langle\rangle},\tilde{\inn}_{\langle\rangle})\:\sqcup\:
   \bigsqcup_{\langle{s},{j}\rangle\in{}^{{<}\omega}{8}\times{7}}\tilde{I}_{{s}}({j})\:\sqcup\:
   \bigsqcup_{{x}\in{}^{\omega}{8}}\{\tilde{k}_{{x}}\}$

   (equality
   $\textsf{range}(\gamma)=
   (\tilde{\outt}_{\langle\rangle},\tilde{\inn}_{\langle\rangle})\cup
   \bigcup_{\langle{s},{j}\rangle\in{}^{{<}\omega}{8}\times{7}}\tilde{I}_{{s}}({j})\cup
   \bigcup_{{x}\in{}^{\omega}{8}}\{\tilde{k}_{{x}}\}$
   follows from (19), (16), (15), and (c3);
   $\{\tilde{k}_{{x}}\}\cap\{\tilde{k}_{{y}}\}=\varnothing$ for ${x}\neq{y}\in{}^{\omega}{8}$
   follows from (6);
   $\tilde{I}_{{s}}({j})\cap\tilde{I}_{{t}}({l})=\varnothing$
   for $\langle{s},{j}\rangle\neq\langle{t},{l}\rangle\in{}^{{<}\omega}{8}\times{7}$
   is asserted in (9);
   $\tilde{I}_{{s}}({j})\cap\{\tilde{k}_{{x}}\}=\varnothing$
   follows from (8);
   $(\tilde{\outt}_{\langle\rangle},\tilde{\inn}_{\langle\rangle})
   \cap\{\tilde{k}_{{x}}\}=\varnothing$
   follows from (10); and
   $(\tilde{\outt}_{\langle\rangle},\tilde{\inn}_{\langle\rangle})
   \cap\tilde{I}_{{s}}({j})=\varnothing$ is asserted in (11)).
\end{itemize}
\medskip

Let $\mathbb{S}$ be the half-interval $[0,2\pi)\subseteq\mathbb{R}$ endowed with the metric $\rho_{\scriptscriptstyle\mathbb{S}},$ which is defined as follows:
$\rho_{\scriptscriptstyle\mathbb{S}}({x},{y})\coloneq\mathsf{min}\{|{x}-{y}|,2\pi-|{x}-{y}|\}.$
The metric space $\mathbb{S}$ is homeomorphic to the unit circle $\mathbb{S}^1\coloneq\{\langle{x}_{1},{x}_{2}\rangle\in\mathbb{R}^{2}:{x}_{1}^{2}+{x}_{2}^{2}=1\}$
by the homeomorphism ${h}:\mathbb{S}\to\mathbb{S}^1$ that takes a point ${x}$ of $\mathbb{S}$ to the point ${h}({x})$ of $\mathbb{S}^1$ such that the polar angle of ${h}({x})$ equals ${x}$ radians.
We shall show that
\begin{itemize}
\item [\ding{226}\,]
   $\mathsurround=0pt
   \gamma\colon\mathbb{S}\to\mathbb{R}^3$ is a continuous injection and
\item [\ding{226}\,]
   $\mathsurround=0pt
   \mathbb{C}^{3}\subseteq\Gamma\subseteq[0,1]^{3},$
\end{itemize}
where $\Gamma\coloneq\textsf{range}(\gamma),$ so that
the composition $\gamma\circ{h}^{{-}1}:\mathbb{S}^1\to\mathbb{R}^3$ is a simple closed curve that we search.

The inclusion $\mathbb{C}^{3}\subseteq\Gamma$ follows from (13), which says that $\mathbb{C}^{3}=\{\tilde{k}_{{x}}:{x}\in{}^\omega{8}\},$ and from (20), which implies $\{\tilde{k}_{{x}}:{x}\in{}^\omega{8}\}\subseteq\Gamma.$ The inclusion $\Gamma\subseteq[0,1]^{3}$ follows from (20) and (b1)--(b5).
Also (19), (16), (15), (c3), and (20) imply that $\gamma$ is an injection.

It remains to show that $\gamma$ is continuous at every point ${p}$ of $\mathbb{S}.$ By (19) we have three cases:

\begin{itemize}
\item[\textup{(i)}]
   ${p}\in({1},2\pi).$

   Then $\gamma$ is continuous at ${p}$ because $({1},2\pi)$ is open in $\mathbb{S}.$
\item[\textup{(ii)}]
   ${p}\in{I}_{{s}}({j})\quad$
   for some ${s}\in{}^{{<}\omega}{8}$ and ${j}\in{7}.$

   Then $\gamma$ is continuous at ${p}$ because ${I}_{{s}}({j})$ is open in $\mathbb{S}.$
\item[\textup{(iii)}]
   ${p}={k}_{{x}}$ for some ${x}\in{}^\omega{8}.$

   Let $\varepsilon >0.$ We shall find $\delta >0$ such that $\mathsf{diam}\Big(\gamma\big(O_{\delta}({k}_{{x}})\big)\Big)<\varepsilon,$
   where $\mathsf{diam}({A})$ is the diameter of a set ${A}$ and $O_{\delta}({q})$ is the $\delta\mathsurround=0pt$-neighbourhood of a point ${q}$ in $\mathbb{S}.$
   Let ${m}\in\omega$ be such that
   $\mathsf{diam}(\tilde{K}_{{x}{\upharpoonright}\hspace{0.7pt}{m}})<\varepsilon/2.$
   Recall that
   ${k}_{{x}}\in{K}_{{x}{\upharpoonright}\hspace{0.7pt}{m}}=
   [\inn_{{x}{\upharpoonright}\hspace{0.7pt}{m}},\outt_{{x}{\upharpoonright}\hspace{0.7pt}{m}}]$ and
   $\gamma({k}_{{x}})=\tilde{k}_{{x}}\in\tilde{K}_{{x}{\upharpoonright}\hspace{0.7pt}{m}}.$
   \begin{itemize}
   \item[\textup{(iii.1)}]
      ${k}_{{x}}\in(\inn_{{x}{\upharpoonright}\hspace{0.7pt}{m}},\outt_{{x}{\upharpoonright}\hspace{0.7pt}{m}}).$

      Then there is $\delta >0$ such that $O_{\delta}({k}_{{x}})\subseteq{K}_{{x}{\upharpoonright}\hspace{0.7pt}{m}},$
      so $\gamma\big(O_{\delta}({k}_{{x}})\big)\subseteq
      \gamma({K}_{{x}{\upharpoonright}\hspace{0.7pt}{m}})\subseteq
      \tilde{K}_{{x}{\upharpoonright}\hspace{0.7pt}{m}}$ by (17), whence $\mathsf{diam}\Big(\gamma\big(O_{\delta}({k}_{{x}})\big)\Big)<\varepsilon/2<\varepsilon$
      by the choice of ${m}.$
   \item[\textup{(iii.2)}]
      ${k}_{{x}}=\inn_{{x}{\upharpoonright}\hspace{0.7pt}{m}}.$
      \begin{itemize}
      \item[\textup{(iii.2.1)}]
         ${x}{\upharpoonright}\hspace{0.7pt}{m}=\langle{0}\rangle^{{m}}.$

         By (1), ${\inn}_{\langle{0}\rangle^{{m}}}={\inn}_{\langle\rangle},$
         so we have ${k}_{{x}}=\inn_{{x}{\upharpoonright}\hspace{0.7pt}{m}}=
         \inn_{\langle\rangle}={0}.$
         Recall that
         $\gamma{\upharpoonright}\hspace{0.7pt}(1,2\pi)=
         \mathsf{LIN}^{1,2\pi}   _{\tilde\outt_{\langle\rangle},
         \tilde\inn_{\langle\rangle}}{\upharpoonright}\hspace{0.7pt}(1,2\pi)$
         by (c2). There is $\delta_{1}>0$ such that
         $\mathsf{diam}\Big(\gamma\big((2\pi-\delta_{1},2\pi)\big)\Big)<\varepsilon/2.$
         Then also
         $\mathsf{diam}\Big(\gamma\big((2\pi-\delta_{1},2\pi)\cup\{0\}\big)\Big)<\varepsilon/2$
         because $\gamma({0})=\gamma({\inn}_{\langle\rangle})=\tilde{\inn}_{\langle\rangle}$
         by (18) and $\tilde{\inn}_{\langle\rangle}$ is a limit point of
         $\gamma\big((2\pi-\delta_{1},2\pi)\big).$
         Since ${K}_{{x}{\upharpoonright}\hspace{0.7pt}{m}}=
         [\inn_{{x}{\upharpoonright}\hspace{0.7pt}{m}},\outt_{{x}{\upharpoonright}\hspace{0.7pt}{m}}],$
         there is $\delta_2>0$ such that
         $[0,\delta_{2})=
         [\inn_{{x}{\upharpoonright}\hspace{0.7pt}{m}},\inn_{{x}{\upharpoonright}\hspace{0.7pt}{m}}+\delta_{2})
         \subseteq{K}_{{x}{\upharpoonright}\hspace{0.7pt}{m}}.$
         Then
         $\gamma\big([{0},\delta_{2})\big)
         \subseteq\gamma({K}_{{x}{\upharpoonright}\hspace{0.7pt}{m}})\subseteq
         \tilde{K}_{{x}{\upharpoonright}\hspace{0.7pt}{m}},$
         hence $\mathsf{diam}\Big(\gamma\big([0,\delta_{2})\big)\Big)
         \leqslant\mathsf{diam}(\tilde{K}_{{x}{\upharpoonright}\hspace{0.7pt}{m}})<\varepsilon/2.$
         So if $\delta=\mathsf{min}\{\delta_{1},\delta_{2}\},$ then
         using the fact that $\mathsf{diam}({A}\cup{B})\leqslant
         \mathsf{diam}({A})+\mathsf{diam}({B})$ whenever ${A}\cap{B}\neq\varnothing,$
         we have
         $$
         \mathsf{diam}\Big(\gamma\big(O_{\delta}({k}_{{x}})\big)\Big)=
         \mathsf{diam}\Big(\gamma\big(O_{\delta}(0)\big)\Big)\leqslant
         \mathsf{diam}\Big(\gamma\big((2\pi-\delta_{1},2\pi)\cup[0,\delta_{2})\big)\Big)
         \leqslant
         $$
         $$
         \mathsf{diam}\Big(\gamma\big((2\pi-\delta_{1},2\pi)\cup\{0\}\big)\Big)+
         \mathsf{diam}\Big(\gamma\big([0,\delta_{2})\big)\Big)<\varepsilon/2+\varepsilon/2
         =\varepsilon.
         $$
      \item[\textup{(iii.2.2)}]
         ${x}{\upharpoonright}\hspace{0.7pt}{m}\neq\langle{0}\rangle^{{m}}.$

         Then ${m}>0$ because ${x}{\upharpoonright}\hspace{0.7pt}0=\langle\rangle=\langle0\rangle^{0}.$
         Let ${i}\coloneq\mathsf{max}\{{j}\in\omega:{j}<{m}\text{ and }{x}_{{j}}>0\},$
         so that ${x}_{{i}}>0$ and
         $$
         {x}{\upharpoonright}\hspace{0.7pt}{m}=\langle{x}_{{0}},\ldots,{x}_{{i}{-}{1}}\rangle
         ^\frown{x}_{{i}}\hat{\;}\langle{0}\rangle^{{m}{-}{i}{-}{1}}=
         ({x}{\upharpoonright}\hspace{0.7pt}{i})^\frown{x}_{{i}}\hat{\;}
         \langle{0}\rangle^{{m}{-}{i}{-}{1}}.
         $$
         Then using (iii.2) and (1) we have
         ${k}_{{x}}=\inn_{{x}{\upharpoonright}\hspace{0.7pt}{m}}=
         \inn_{({x}{\upharpoonright}\hspace{0.7pt}{i})^\frown{x}_{{i}}}.$
         Recall that
         ${I}_{{x}{\upharpoonright}\hspace{0.7pt}{i}}({x}_{{i}}{-}{1})=
         (\outt_{({x}{\upharpoonright}\hspace{0.7pt}{i})^\frown({x}_{{i}}-{1})},
         \inn_{({x}{\upharpoonright}\hspace{0.7pt}{i})^\frown{x}_{{i}}})$ by (a5), that
         $$
         \gamma{\upharpoonright}\hspace{0.7pt}{I}_{{x}{\upharpoonright}\hspace{0.7pt}{i}}({x}_{{i}}{-}{1})=   \mathsf{LIN}^{\outt_{({x}{\upharpoonright}\hspace{0.7pt}{i})^\frown({x}_{{i}}-{1})},
         \inn_{({x}{\upharpoonright}\hspace{0.7pt}{i})^\frown{x}_{{i}}}}   _{\tilde\outt_{({x}{\upharpoonright}\hspace{0.7pt}{i})^\frown({x}_{{i}}-{1})},
         \tilde\inn_{({x}{\upharpoonright}\hspace{0.7pt}{i})^\frown{x}_{{i}}}}   {\upharpoonright}\hspace{0.7pt}{I}_{{x}{\upharpoonright}\hspace{0.7pt}{i}}({x}_{{i}}{-}{1})
         $$
         by (c1), and that
         $\gamma(\inn_{({x}{\upharpoonright}\hspace{0.7pt}{i})^\frown{x}_{{i}}})= \tilde{\inn}_{({x}{\upharpoonright}\hspace{0.7pt}{i})^\frown{x}_{{i}}}$
         by (18). So there is $\delta_{1}>0$ such that\linebreak
         $
         \mathsf{diam}\Big(\gamma\big((\inn_{({x}{\upharpoonright}
         \hspace{0.7pt}{i})^\frown{x}_{{i}}}-\,\delta_1,
         \inn_{({x}{\upharpoonright}\hspace{0.7pt}{i})^\frown{x}_{{i}}}]\big)\Big)<\varepsilon/2.
         $
         That is,
         $\mathsf{diam}\Big(\gamma\big(({k}_{{x}}-\delta_1,{k}_{{x}}]\big)\Big)<\varepsilon/2.$
         Also we have
         $[{k}_{{x}},\outt_{{x}{\upharpoonright}\hspace{0.7pt}{m}}]=
         [\inn_{{x}{\upharpoonright}\hspace{0.7pt}{m}},\outt_{{x}{\upharpoonright}\hspace{0.7pt}{m}}]=
         {K}_{{x}{\upharpoonright}\hspace{0.7pt}{m}},$ hence there is $\delta_{2}>0$ such that
         $[{k}_{{x}},{k}_{{x}}+\delta_2)\subseteq
         {K}_{{x}{\upharpoonright}\hspace{0.7pt}{m}},$
         and then $\mathsf{diam}\Big(\gamma\big([{k}_{{x}},{k}_{{x}}+\delta_2)\big)\Big)<\varepsilon/2$
         by the choice of ${m}.$
         So if $\delta=\mathsf{min}\{\delta_1,\delta_2\},$ then
         $$
         \mathsf{diam}\Big(\gamma\big(O_{\delta}({k}_{{x}})\big)\Big)\leqslant
         \mathsf{diam}\Big(\gamma\big(({k}_{{x}}-\delta_1,{k}_{{x}}]\big)\Big)+
         \mathsf{diam}\Big(\gamma\big([{k}_{{x}},{k}_{{x}}+\delta_2)\big)\Big)<\varepsilon.
         $$
      \end{itemize}
   \item[\textup{(iii.3)}]
      ${k}_{{x}}=\outt_{{x}{\upharpoonright}\hspace{0.7pt}{m}}.$

      This case is similar to (iii.2).
   \end{itemize}
\end{itemize}
\end{proof}


\end{document}